\documentclass[12pt,reqno]{amsart}
\usepackage{tikz}
\usetikzlibrary{arrows.meta, positioning}
\usepackage[mathscr]{eucal}
\usepackage{caption}
\usepackage{subcaption}
\usepackage{amscd}
\usepackage{cite}
\usepackage{amsfonts}
\usepackage{enumerate}
\usepackage{amsmath, amsthm, amssymb}
\usepackage{latexsym}
\usepackage{tikz-cd}
\usepackage{graphicx}
\usepackage{epstopdf}
\usepackage{hyperref}
\newtheorem{theorem}{Theorem}[section]

\newtheorem{proposition}{Proposition}[section]
\newtheorem{corollary}{Corollary}[section]
\theoremstyle{definition}
\newtheorem{definition}{Definition}[section]
\newtheorem{example}{Example}[section]
\newtheorem{observation}{Observation}[section]

\theoremstyle{remark}
\newtheorem{remark}{Remark}[section]
\numberwithin{equation}{section}

\begin{document}
	\title[Polynomial $\mathcal{Z}$-contraction]{Polynomial $\mathcal{Z}$-contraction: A unified approach with Application}
	\author[Hasanuzzaman]{Md Hasanuzzaman$^{1}$, Stojan N. Radenovi{\'c}$^{2}$}
	%\footnote{Corresponding author: Md Hasanuzzaman}
	\maketitle
	\begin{center}
		{\footnotesize $^{1}$Department of Mathematics, Thapar Institute of Engineering \& Technology, Patiala-147004, India,\\
		$^{2}$Faculty of Mechanical Engineering, University of Belgrade, Kraljice Marije 16, Beograd 35, 11120, Serbia}

		{\footnotesize $^{1}$md.hasanuzzaman1@gmail.com, $^{2}$radens@beotel.net}
	\end{center}
	\begin{abstract} In this paper, we introduce polynomial $\mathcal{Z}$-contraction by unifying the ideas of polynomial contraction and  $\mathcal{Z}$-contraction and also provide demonstrative examples to show the effectiveness and novelty. Chiefly, we prove two fixed point results for polynomial $\mathcal{Z}$-contraction employing simulation functions. Lastly, we utilize our main result to discuss the existence and uniqueness of solution for  boundary value problem with a concrete example in $L^p(0,\pi)$ under suitable assumptions.
	\end{abstract}
	
	\vskip 0.1 mm\noindent {\bf \emph{MSC}}: 47H10, 54H25. 
	
	\vskip 0.1 mm\noindent {\bf\emph{Keywords}}: Fixed points, polynomial $\mathcal{Z}$-contraction, simulation functions, metric space,  boundary value problem.
\section{Introduction} 

Metric fixed point theory laid its foundation on the pioneer work of Banach \cite{banach1922operations} on contraction mapping which asserts that, every contraction on a complete metric space has a unique fixed point. This result widely known as Banach's contraction principle (in short BCP) in the literature.  Due to the simplicity, utility and numerous applications in various fields within and beyond mathematics of this classical result, the extension and generalization of BCP is old but still an active area of research till the date.

In 1962, Edelstein \cite{edelstein1962fixed} obtained a fixed point theorem for contractive(sometimes called, strict contraction) mapping on compact metric space. Thereafter, out of many, one of the main goals in metric fixed point theory remains to obtain existence and uniqueness fixed point results for those class of mappings that lies in between contraction and contractive classes in complete metric space. For some notable development in this direction, one can refer \cite{wardowski2012fixed,khojasteh2015new,matkowski1977fixed,boyd1969nonlinear,jachymski2008contraction,rhoades1977comparison} and references cited therein.

In 2015, with a view to enlarge the class of contraction and to obtain a fixed point result in complete metric space which also in turn unifies several existing classical results (linear as well as nonlinear contractions), Khojasteh et al.\cite{khojasteh2015new} proposed the idea of $\mathcal{Z}$-contraction using a class of control functions called as simulation functions and proved a fixed point result for the same. As every $\mathcal{Z}$-contraction is contractive (see Remark \eqref{12p1r}), therefore the class of $\mathcal{Z}$-contraction is contained in class of  contractive mapping and hence Khojasteh et al.\cite{khojasteh2015new} have successfully obtained fixed point results for a mapping that lies  in between contraction and contractive classes of mappings acting on a complete metric space. For some related results and their applications in this direction, one can consult, e.g., \cite{Hasan2020,jain2023,jain2025a, rad2018} and references cited therein.

Very recently, Jleli et al. \cite{Jleli2025} introduced polynomial contraction  and prove fixed point results for such contraction that remains yet another substantial generalization of Banach contraction principle in a slightly different and meaningful way. To be precise, authors \cite{Jleli2025} proved two fixed point results  for polynomial contraction, firstly for continuous mapping and later for Picard continuous mapping(not necessarily continuous). This approach replaces the simple distance function in the contraction inequality with a polynomial expression of distances, allowing higher-order polynomial distance terms with point-dependent weights.

Motivated by above, in this paper, we introduce the notion of polynomial $\mathcal{Z}$-contraction, and provide some examples to exhibit the utility and effectiveness of our newly introduced contraction over existing similar results. We prove two fixed point results for polynomial $\mathcal{Z}$-contraction in complete metric space. This new class of mappings unifies the implicit form of $\mathcal{Z}$-contractions(containing linear as well as nonlinear contractions) with the algebraic flexibility of underlying metric in polynomial contractions.  Our results not only generalize the main results of Khojasteh et al.\cite{khojasteh2015new} and Jleli et al. \cite{Jleli2025} but also demonstrate the underlying principles that can be combined to yield more comprehensive and unified theory in this direction.\\

In what follows, the paper is organized as: Section 2 is devoted to review and recollect the necessary definitions, facts and auxiliary results related to $\mathcal{Z}$-contraction and polynomial contraction that are needed in our subsequent discussions. Section 3 deals with main results on polynomial $\mathcal{Z}$-contraction, initially for continuous mappings and later for Picard-continuous mappings. Thereafter, we derive a few corollaries and provide  examples to illustrate the scope and significance of main results. Lastly, Section 4 contains an application to the existence and uniqueness of solution for semilinear elliptic Dirichlet  boundary value problem along with an example in $L^p(0,\pi)$ under suitable assumption.\\

Let us summarize main contributions of this paper as follows:
\begin{itemize}
	\item We introduce a class of polynomial $Z$-contractions in complete metric spaces, wherein the contractive condition is formulated through a polynomial expression of distances involving a simulation function.
	
	\item The proposed class admits mappings that do not satisfy any known $Z$-contraction condition and are not covered by existing polynomial contraction condition, as illustrated by explicit example (see Example \ref{exm1} ).
	
	\item Two fixed point theorems are established for polynomial $Z$-contractions under standard completeness and continuity assumptions, including the case of Picard-continuous mappings. A separate example for Picard-continuous mappings is also presented (see Example \ref{exm3}).
	
	\item As an application, the abstract results are used to study the existence and uniqueness of solutions of a nonlinear boundary value problem in $L^{p}(0,\pi)$ with a demonstrative example.
\end{itemize}

\section{Preliminaries}
We shall reminisce the following terminological and notational conventions to make our discussion possibly self-sustained within this manuscript. In what follows, $\mathbb{N}$ and $\mathbb{R}$ respectively denote the sets of natural numbers, real numbers and $\mathbb{N}_0=\mathbb{N}\cup\{0\}$. Also, $Fix(S)$ is the set of all fixed point of a mapping $S,\ i.e., \ Fix(S)=\{r| S(r)=r\}.$\\

Firstly, we recall the class of simulation functions introduced by Khojasteh et al.\cite{khojasteh2015new}.
\begin{definition}\label{2p1t1d}
	A mapping $\zeta:[0,+\infty)\times[0,+\infty)\rightarrow\mathbb{R}$ is called a simulation function if the following are satisfied:
	\begin{enumerate}[($\zeta_1$)]
		\item $\zeta(0,0)=0$;
		\item $\zeta(t,s)<s-t$ for all $t,s>0$;
		\item if $\{t_n\}, \{s_n\}$ are sequences in $(0,+\infty)$ with $\displaystyle\lim_{n\to+\infty}t_n=\displaystyle\lim_{n\to+\infty}s_n>0$, then $ \ \ \ \ \ $ $\displaystyle\limsup_{n\to+\infty}\zeta(t_n,s_n)<0.$
	\end{enumerate}
\end{definition}  
Here onward, we shall denote the family of all simulation functions by $\mathcal{Z}$.

\begin{example}
	``Let $\zeta_i:[0,+\infty)\times[0,+\infty)\rightarrow\mathbb{R}$ for $i=1,2,3,4,5$ be defined as follows:
	\begin{enumerate}[$\bullet$]
		
		\item $\zeta_1(t,s)=\alpha s-t$ for all $t,s\in[0,+\infty)$, and $\alpha\in[0,1)$.
		
		\item $\zeta_2(t,s)=\Psi(s)-\Phi(t)$ for all $t,s\in[0,+\infty)$, where $\Phi,\Psi:[0,+\infty)\rightarrow[0,+\infty)$ are two continuous functions such that $\Psi(t)=\Phi(t)=0$ if and only if $t=0$ and $\Psi(t)<t\leq\Phi(t)$ for all $t>0$.
		
		\item $\zeta_3(t,s)=s-\eta(s)-t$ for all$t,s\in[0,+\infty)$, where $\eta:[0,+\infty)\rightarrow[0,+\infty)$ is a lower semi continuous function and $\eta(t)=0$ if and only if $t=0$.
		
		\item $\zeta_4(t,s)=s-\int_{0}^{t}\varphi(t)dt$ for all $t,s\in[0,+\infty)$, where $\varphi:[0,+\infty)\rightarrow[0,+\infty)$ is a function such that $\int_{0}^{\epsilon}\varphi(t)dt$ exists and $\int_{0}^{\epsilon}\varphi(t)dt>\epsilon$, $\epsilon>0.$ 

		\item $\zeta_5(t,s)=\frac{s}{\alpha s+1}-$ for all $t,s\in[0,+\infty)$ and $\alpha>0$.
	\end{enumerate}
	Then $\zeta_i\in \mathcal{Z}$ for all $i=1,2,3,4,5$."
\end{example}
For further examples and  results related to simulation functions, one may refer  \cite{khojasteh2015new,roldan2015coincidence,karapinara2016fixed,karapinar2017approach,alsulami2014proposal} and reference cited therein. For a detailed survey on simulation functions, one may consult \cite{Chanda2019}. \\
Now, we recall the definition of $\mathcal{Z}$-contraction introduced by Khojasteh et al.\cite{khojasteh2015new}.
\begin{definition}\cite{khojasteh2015new}\label{khojasteh}
	Let $(M,d)$ be a metric space. Then $S:M\rightarrow M$ is said to be a $\mathcal{Z}$-contraction for $\zeta\in\mathcal{Z}$ if
	\begin{equation}
	\zeta(d(Sr,Ss),d(r,s))\geq0\ \ \text{for all}\ r,s\in M. 
	\end{equation} 
\end{definition}

\begin{remark}\label{12p1r}
	We want to emphasize two important fact about $\mathcal{Z}$-contraction due to the condition $(\zeta_2)$ of Definition \ref{2p1t1d}.
	\begin{itemize}
		\item[$\bullet$] [cf. \cite{Hasan2020}] If a mapping $S$ is a $\mathcal{Z}$-contraction then it cannot be an isometry as $\zeta(x,x)<0,$ for all $x>0$.
		\item[$\bullet$] [cf. \cite{khojasteh2015new}]  It is also easy to observe that for all $x\geq y>0,~ \zeta(x,y)<0$, so if $S$ is a $\mathcal{Z}$-contraction with respect to $\zeta\in \mathcal{Z}$, then  $d(Sr,Ss)<d(r, s)$ whenever $r\neq s$ for all $r,s\in M$. Hence, every $\mathcal{Z}$-contraction is contractive and therefore continuous.	
	\end{itemize}
	If $\zeta(x,y)=\lambda y-x$ for all $x,y\in[0,+\infty)$ with $\lambda\in[0,1),$ then $\mathcal{Z}$-contraction turn to classical Banach contraction. Thus, the class of $\mathcal{Z}$-contractions essentially lies between the class of classical contraction and contractive mappings. Many attempt have been made earlier to obtain fixed point results for those class of mappings that lies in between contraction and contractive class of mappings in complete metric space, the class of $\mathcal{Z}$-contractions is one of them.
\end{remark}
 
Next, we recall the result of Khojasteh et al.\cite{khojasteh2015new} for $\mathcal{Z}$-contraction in a complete metric space that utilizes the properties of simulation functions. 
\begin{theorem}\cite{khojasteh2015new}\label{khojastehth}
	Let $(M,d)$ be a complete metric space and $S:M\rightarrow M$ be a $\mathcal{Z}$-contraction for some $\zeta$. Then $S$ has a unique fixed point.
\end{theorem}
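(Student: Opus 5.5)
The plan is the standard Picard iteration argument, driven by the properties $(\zeta_1)$--$(\zeta_3)$ of Definition \ref{2p1t1d}. Fix $r_0\in M$ and set $r_{n+1}=Sr_n$. If $r_{n_0}=r_{n_0+1}$ for some $n_0$, then $r_{n_0}$ is a fixed point and existence is done. Otherwise $d_n:=d(r_n,r_{n+1})>0$ for all $n$.

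The contraction condition together with $(\zeta_2)$ gives $0\le\zeta(d_{n+1},d_n)<d_n-d_{n+1}$. Hence $(d_n)$ is strictly decreasing and converges to some $L\ge0$. If $L>0$, apply $(\zeta_3)$ with $t_n=d_{n+1}$ and $s_n=d_n$. Both sequences tend to $L>0$, so $\limsup\zeta(d_{n+1},d_n)<0$, contradicting $\zeta(d_{n+1},d_n)\ge 0$. Therefore $d_n\to0$, i.e.\ the sequence is asymptotically regular.

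Uniqueness follows directly. If $u\neq v$ are both fixed points, then $0\le\zeta(d(u,v),d(u,v))<0$ by $(\zeta_2)$, which is impossible. This is the first bullet of Remark \ref{12p1r}.

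The main obstacle is showing that $(r_n)$ is Cauchy. Suppose it is not. Then there exist $\varepsilon>0$ and indices $m_k>n_k>k$ such that $d(r_{m_k},r_{n_k})\ge\varepsilon$, with $m_k$ chosen minimal so that $d(r_{m_k-1},r_{n_k})<\varepsilon$. The triangle inequality and $d_n\to0$ then give
\[
\lim_{k\to\infty}d(r_{m_k},r_{n_k})=\lim_{k\to\infty}d(r_{m_k+1},r_{n_k+1})=\varepsilon .
\]
For the second limit, note that $d(r_{m_k+1},r_{n_k+1})$ differs from $d(r_{m_k},r_{n_k})$ by at most $d_{m_k}+d_{n_k}$. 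Moreover $d(r_{m_k+1},r_{n_k+1})>0$ for large $k$. Now apply $(\zeta_3)$ with $t_k=d(r_{m_k+1},r_{n_k+1})$ and $s_k=d(r_{m_k},r_{n_k})$. This yields $\limsup\zeta(t_k,s_k)<0$, contradicting the contraction condition $\zeta(t_k,s_k)\ge0$. So $(r_n)$ is Cauchy, and by completeness $r_n\to z$ for some $z\in M$.

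Finally, $S$ is continuous by Remark \ref{12p1r}, since $d(Sr,Ss)\le d(r,s)$. Hence $r_{n+1}=Sr_n\to Sz$, and uniqueness of limits gives $Sz=z$.
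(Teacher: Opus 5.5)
Your proof is correct and follows the same route the paper uses. The paper does not reprove this cited result directly; it recovers it in its $\mathcal Z$-contraction corollary from Theorem \ref{4p1th} with $k=1$, $\alpha_0\equiv 0$, $\alpha_1\equiv 1$, and that proof is exactly your scheme: strict decrease of $d(r_n,r_{n+1})$ via $(\zeta_2)$, its vanishing via $(\zeta_3)$, the $\varepsilon$-subsequence argument with both limits equal to $\varepsilon$ contradicting $(\zeta_3)$, the fixed point via continuity, and uniqueness via $(\zeta_2)$. Your remark that continuity follows automatically from $d(Sr,Ss)\le d(r,s)$ supplies the continuity hypothesis the corollary assumes explicitly, and your minimal choice of $m_k$ is cleaner than the paper's index selection.
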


Very recently, Jleli et al. \cite{Jleli2025} introduced the idea of polynomial contraction and extended Banach contraction principle in a slightly different and meaningful way that mainly replaces the simple distance function in the contraction inequality with a polynomial expression of distances.

\begin{definition}\label{jlelipoly2025def}
	Let $(M,d)$ be a metric space and $S:M\rightarrow M$. Then $S$ is said to be polynomial contraction if there exist $\lambda\in [0,1), ~k\in \mathbb{N}$  and mappings $\alpha_i:M\times M\to [0,+\infty)$ for $i=0,1,\ldots,k$ such that
	\begin{equation}\label{jlelipoly2025}
		\sum_{i=0}^{k}\alpha_i(Sr,Ss)d^{i}(Sr,Ss)\leq\lambda \sum_{i=0}^{k}\alpha_i(r,s)d^{i}(r,s), \ \text{for all}\ r,s\in M.
	\end{equation} 
\end{definition}
Let us denote $\displaystyle\mathbb{P}(r,s):=\sum_{i=0}^{k}\alpha_i(r,s)d^{i}(r,s),$ then above contraction condition \eqref{jlelipoly2025} can be viewed as 
\begin{equation*}
	\mathbb{P}(Sr,Ss)\leq\lambda~\mathbb{P}(r,s),~ \lambda\in [0,1).
\end{equation*}

\begin{theorem}\cite{Jleli2025}\label{Jlelipolyth}
	Let $(M,d)$ be a complete metric space and let $S:M\to M$ be a mapping.
	Assume that there exist a constant $\lambda\in[0,1)$, a natural number
	$k\ge1$, and a family of mappings $\alpha_i:M\times M\to[0,+\infty), \ i=0,1,\ldots,k,$ such that for all $r,s\in M$,
	\begin{equation}\label{poly_contr_JPS}
		\mathbb{P}(Sr,Ss)\leq\lambda~\mathbb{P}(r,s).
	\end{equation}
	Further, assume  that if each $\alpha_i$ is continuous and there exists $j\in\{1,\ldots,k\}$ and a constant $A_j>0$ such that $\alpha_j(r,s)\ge A_j\ \text{for all } r,s\in M$. Then $S$ has a unique fixed point $r^*\in M$, and for every $r\in M$,
	the Picard sequence $\{S^n r\}_{n\in\mathbb N}$ converges to $r^*$.
\end{theorem}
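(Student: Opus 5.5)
Fix $r_0\in M$ and consider the Picard sequence $r_n=S^n r_0$. Write $P_n:=\mathbb{P}(r_n,r_{n+1})$. Iterating \eqref{poly_contr_JPS} gives $P_n\le \lambda^n P_0$. Since every $\alpha_i\ge 0$ and $\alpha_j\ge A_j$, we have $A_j d^j(r_n,r_{n+1})\le P_n\le\lambda^n P_0$. Hence
\[
d(r_n,r_{n+1})\le \Bigl(\tfrac{P_0}{A_j}\Bigr)^{1/j}\bigl(\lambda^{1/j}\bigr)^n .
\]
Because $\lambda^{1/j}\in[0,1)$, the series $\sum_n d(r_n,r_{n+1})$ converges geometrically. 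The triangle inequality then shows $\{r_n\}$ is Cauchy, and completeness gives $r_n\to r^*$ for some $r^*\in M$.

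\textbf{Fixed point.} The key step is to show $Sr^*=r^*$ using only continuity of the $\alpha_i$, not continuity of $S$. Apply \eqref{poly_contr_JPS} with $r=r_n$ and $s=r^*$:
\[
A_j d^j(r_{n+1},Sr^*)\le \mathbb{P}(r_{n+1},Sr^*)\le \lambda\,\mathbb{P}(r_n,r^*)=\lambda\sum_{i=0}^k\alpha_i(r_n,r^*)d^i(r_n,r^*).
\]
As $n\to\infty$, continuity gives $\alpha_i(r_n,r^*)\to\alpha_i(r^*,r^*)$. For $i\ge1$, $d^i(r_n,r^*)\to0$, so those terms vanish. The term $i=0$ tends to $\lambda\alpha_0(r^*,r^*)$, which need not be zero, and this is the main obstacle.

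To handle it, first apply \eqref{poly_contr_JPS} with $r=s$ to get $\alpha_0(Sr,Sr)\le\lambda\alpha_0(r,r)$. This alone does not force $\alpha_0(r^*,r^*)=0$. So instead keep the $i=0$ terms on both sides of the inequality and pass to the limit in the full expression:
\[
\sum_i\alpha_i(r_{n+1},Sr^*)d^i(r_{n+1},Sr^*)\le\lambda\sum_i\alpha_i(r_n,r^*)d^i(r_n,r^*).
\]
The left side tends to $\mathbb{P}(r^*,Sr^*)$ and the right side to $\lambda\alpha_0(r^*,r^*)$. This yields
\[
\mathbb{P}(r^*,Sr^*)\le\lambda\alpha_0(r^*,r^*),
\]
which still does not give $Sr^*=r^*$ directly.

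The fallback is to pin down the diagonal values. Continuity gives $\alpha_0(r_n,r_{n+1})\to\alpha_0(r^*,r^*)$, while $\alpha_0(r_n,r_{n+1})\le P_n\le\lambda^nP_0\to0$. Therefore $\alpha_0(r^*,r^*)=0$. The displayed limit inequality then gives $\mathbb{P}(r^*,Sr^*)\le 0$, so $A_jd^j(r^*,Sr^*)\le 0$ and $Sr^*=r^*$.

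\textbf{Uniqueness.} Suppose $u,v$ are fixed points. Then $\mathbb{P}(u,v)=\mathbb{P}(Su,Sv)\le\lambda\mathbb{P}(u,v)$, so $\mathbb{P}(u,v)=0$. Hence $A_jd^j(u,v)=0$ and $u=v$. Since $r_0$ was arbitrary, every Picard sequence converges to this unique fixed point $r^*$.
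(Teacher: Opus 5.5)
Your proof is correct and complete. The paper does not prove this statement itself; it is quoted from Jleli et al. The natural comparison is therefore with the paper's proof of Theorem~\ref{4p1th}, which is the $\zeta$-analogue, and your route differs from it at both key steps.

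\textbf{Cauchy property.} You use the linear factor to get $P_n\le\lambda^nP_0$, hence $d(r_n,r_{n+1})\le (P_0/A_j)^{1/j}\mu^n$ with $\mu=\lambda^{1/j}<1$, and summability finishes the job. The paper has only the strict decrease $\mathbb{P}(r_{n+1},r_{n+2})<\mathbb{P}(r_n,r_{n+1})$, with no rate. It must therefore argue by contradiction along $\epsilon$-separated subsequences, invoke $(\zeta_3)$, and use \emph{uniform} continuity of the $\alpha_i$ to compare $\mathbb{P}(r_{m(l)+1},r_{n(l)+1})$ with $\mathbb{P}(r_{m(l)},r_{n(l)})$.

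\textbf{Fixed-point step.} You pass to the limit in $\mathbb{P}(r_{n+1},Sr^*)\le\lambda\,\mathbb{P}(r_n,r^*)$ using only pointwise continuity of the $\alpha_i$. The decisive observation is $\alpha_0(r^*,r^*)=\lim_{n\to\infty}\alpha_0(r_n,r_{n+1})\le\lim_{n\to\infty}P_n=0$. The paper instead simply assumes $S$ is continuous (Theorem~\ref{4p1th}) or Picard continuous (Theorem~\ref{4p2th}).

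Your version buys three things:
\begin{itemize}
\item an explicit a priori estimate $d(r_n,r^*)\le (P_0/A_j)^{1/j}\mu^n/(1-\mu)$;
\item no regularity assumption on $S$;
\item only plain continuity of the coefficients, which is exactly what the statement assumes.
\end{itemize}
The paper's scheme buys something else: it survives replacing $\lambda$ by an arbitrary simulation function, where geometric decay is unavailable.

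You can delete the exploratory detours in your fixed-point paragraph: the diagonal inequality $\alpha_0(Sr,Sr)\le\lambda\alpha_0(r,r)$ and the intermediate ``which still does not give'' remark. The final chain of inequalities stands on its own.
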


\begin{definition}
	Let $(M,d)$ be a metric space, $S:M\rightarrow M$. Then $S$ is said to be Picard-continuous if for any $t\in M$,
	\begin{center}
		$\displaystyle\lim_{n\to +\infty}d(S^n{t_0},t)=0$~ for some $t_0\in M$ implies $\displaystyle\lim_{n\to +\infty}d(S(S^n{t_0}),S(t))=0$.
	\end{center}
	That is, $S$ is continuous along its own Picard iterates.	
\end{definition}
\begin{remark}
	A few facts regarding  Picard-continuous mappings.
	\begin{itemize}
		\item Every continuous mapping is Picard continuous, but the converse need not be true. Picard continuity only requires continuity along sequences generated by iteration of $S$, not along all convergent sequences. For example, consider $M=[0,1]\subset\mathbb{R}$ and define $S:M\to M$ by 
		\begin{equation*}
			S(r)=\left\{
			\begin{array}{ll}
				0, & \text{for} \ \hbox{$r\in[0,1)$}\\
				\frac{1}{k}, & \text{for}~ r=1 ~\ \hbox{(where, $k$ is a fixed natural number.)}
			\end{array}
			\right.
		\end{equation*}
		Then, it is easy to check that $S$ is not continuous (at $r=1$) but it is Picard continuous.

		\item  Sometimes such mappings are called orbitally continuous mappings, since continuity is required only along the orbit $\{r,Sr,S^2r,\cdots\}$
	\end{itemize}
In the next section, we show that our main result ($i.e.,$ Theorem \ref{4p1th}) is expandable for Picard-continuous mappings as well.
\end{remark}

\section{Main Results}
In this section, firstly we define polynomial $\mathcal{Z}$-contraction and make some observations and consequences. Then, we obtain fixed point results for mappings that satisfy polynomial $\mathcal{Z}$-contraction condition. Our approach will be similar to Khojasteh et al.\cite{khojasteh2015new} in the light of polynomial contraction introduced by Jleli et al. \cite{Jleli2025}, however the obtained result will be more generalized in nature.  

\begin{definition}
	Let $(M,d)$ be a metric space and $S:M\rightarrow M$. Then $S$ is said to be polynomial $\mathcal{Z}$-contraction for some $\zeta\in\mathcal{Z}$, if there exists positive integer $k$, and mappings $\alpha_i:M\times M\to [0,+\infty)$ for $i=0,1,\ldots,k$ such that
	\begin{equation}
		\zeta(\mathbb{P}(Sr,Ss),\mathbb{P}(r,s))\geq0\ \ \ \text{for all}\ r,s\in M,
	\end{equation} 
	where $\mathbb{P}(r,s):=\sum_{i=0}^{k}\alpha_i(r,s)d^{i}(r,s).$
\end{definition}

\begin{remark}
	From above, it is clear that the polynomial $\mathcal{Z}$-contraction is of course a proper generalization as well as unification of existing standard contraction conditions, because it contains both $\mathcal{Z}$-contractions and standard polynomial contractions as special cases.
	\begin{enumerate}
		\item If we set $k=1, \alpha_0=0, and~ \alpha_1\equiv 1$ (a constant function with value 1), the above definition reduces
		to the $\mathcal{Z}$-contraction (see Definition \ref{khojasteh}).
		\item If we choose $\zeta(t,s)=\lambda s-t$ for all $t,s\in[0,+\infty)$ with $\lambda\in[0,1),$ then it reduces to the standard polynomial
		contraction. 
	\end{enumerate}
\end{remark}

The following implications naturally holds. 
$$\boxed{
	\begin{array}{ccc}
		\textbf{Banach contraction} & \longrightarrow & \boldsymbol{\mathcal{Z}\textbf{-contraction}} \\[6pt]
		\Big\downarrow &  & \Big\downarrow \\[6pt]
		\textbf{Polynomial contraction} & \longrightarrow & \boldsymbol{\textbf{Polynomial } \mathcal{Z}\textbf{-contraction}}
	\end{array}
}$$

Reverse implications need not always true in general~(see Example \ref{exm1}).

\begin{remark}
	Since polynomial contractions need not necessarily continuous (cf. \cite{Jleli2025}) therefore unlike $\mathcal{Z}$-contractions, polynomial $\mathcal{Z}$-contractions are not necessarily continuous.
\end{remark}
However, the following proposition gives the necessary conditions under which polynomial $\mathcal{Z}$-contractions are continuous.
\begin{proposition}\label{prop1}
Let $(M,d)$ be a metric space, and $S:M\to M$ be a  polynomial $\mathcal{Z}$-contraction for some $\zeta\in\mathcal{Z}$. Assume that the coefficient functions $\alpha_i(r,s)$ are continuous and there exist $j\in \{1,\ldots, k\}$ and a constant $A_j>0$ such that $\alpha_j(r,s)\geq A_j$ for all $r, s\in M$. Furthermore, assume that $\alpha_0(r,s)=0,  r,s\in M$. Then S is a continuous.
\begin{proof}
Let $\{r_n\}\subset M$ be a sequence such that $r_n\to r$ for some $r\in M$. 
As $\alpha_i$ and $d^i$ are continuous, $\mathbb{P}(r,s)=\sum_{i=0}^{k}\alpha_i(r,s)d^{i}(r,s)$ is also continuous in both variables. Thus,
$$ \lim_{n\to +\infty}\mathbb{P}(r_n,r)=\mathbb{P}(r,r)=0.$$
Now, if $Sr_n=Sr$ for all sufficiently large $n\in\mathbb{N}$, then the result is obvious. Otherwise, assume that $\mathbb{P}(Sr_n,Sr)>0$, then by polynomial $\mathcal{Z}$-contraction condition we get 
\begin{eqnarray*}
	0\leq\zeta(\mathbb{P}(Sr_n,Sr), \mathbb{P}(r_n,r))< \mathbb{P}(r_n,r)-\mathbb{P}(Sr_n,Sr)
\end{eqnarray*}
  implies that $\mathbb{P}(Sr_n,Sr)< \mathbb{P}(r_n,r)$ and by taking limit we get $\displaystyle\lim_{n\to +\infty}\mathbb{P}(Sr_n,Sr)=0$.

 Also, from the hypothesis,  $\alpha_j(r,s)\geq A_j>0$ gives the bound  
\begin{equation}
	\mathbb{P}(Sr_n,Sr)\geq A_jd^{j}(Sr_n,Sr)>0.
\end{equation}
Therefore, we must have $\displaystyle\lim_{n\to +\infty}	d^{j}(Sr_n,Sr)=0$ for any such $j$.

As, $\alpha_0(r,s)=0$, for all $r,s\in M$, we can conclude from above that  $\displaystyle\lim_{n\to +\infty}	d(Sr_n,Sr)=0$ and hence $S$ is continuous.
\end{proof}
\end{proposition}

Here, we present our first main result for a continuous self mapping $S$ on $M$ that enjoys polynomial $\mathcal{Z}$-contraction condition on a complete metric space.
\begin{theorem}\label{4p1th}
	Let $(M,d)$ be a complete metric space, and $S:M\to M$ be a continuous polynomial $\mathcal{Z}$-contraction for some $\zeta\in\mathcal{Z}$. Assume that the coefficient functions $\alpha_i(r,s)$ are uniformly continuous (with respect to the usual product metric) and there exist $j\in \{1,\ldots, k\}$ and a constant $A_j>0$ such that $\alpha_j(r,s)\geq A_j$ for all $r, s\in M$. Then $S$ has a unique fixed point $r^*\in M$ and for every $r\in M$ the sequence $\{S^n{r}\}_{n\in\mathbb{N}}$ converges to $r^*$(where $S^n:= S \underset{n-\text{times}}{\underbrace{\circ \cdots\circ}} S$).
\end{theorem}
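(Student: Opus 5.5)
The plan is to combine the Picard-iteration argument of Khojasteh et al.\ for Theorem \ref{khojastehth} with the transfer from $\mathbb{P}$ to $d$ used by Jleli et al.\ for Theorem \ref{Jlelipolyth}. Fix $r_0\in M$ and put $r_n=S^nr_0$ and $p_n=\mathbb{P}(r_n,r_{n+1})$. If $r_n=r_{n+1}$ for some $n$, we already have a fixed point, so assume $r_n\neq r_{n+1}$ for all $n$. Then $d(r_n,r_{n+1})>0$, and the bound
\[
p_n\ge A_j d^j(r_n,r_{n+1})>0
\]
shows $p_n>0$. By $(\zeta_2)$, the contraction condition gives $0\le \zeta(p_{n+1},p_n)<p_n-p_{n+1}$. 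Hence $\{p_n\}$ is strictly decreasing, and it converges to some $L\ge0$. If $L>0$, apply $(\zeta_3)$ with $t_n=p_{n+1}$ and $s_n=p_n$ to get $\limsup\zeta(p_{n+1},p_n)<0$, a contradiction. So $p_n\to0$. Since $\mathbb{P}\ge A_j d^j$ everywhere, this yields $d(r_n,r_{n+1})\to0$.

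\textbf{Cauchy property (the main obstacle).} Suppose $\{r_n\}$ is not Cauchy. Then there exist $\varepsilon>0$ and subsequences $m_k>n_k>k$ with $d(r_{m_k},r_{n_k})\ge\varepsilon$ and $d(r_{m_k-1},r_{n_k})<\varepsilon$. The standard triangle-inequality argument, together with $d(r_n,r_{n+1})\to0$, gives
\[
d(r_{m_k},r_{n_k})\to\varepsilon \quad\text{and}\quad d(r_{m_k+1},r_{n_k+1})\to\varepsilon .
\]
The difficulty is that the contraction condition is stated in terms of $\mathbb{P}$, not $d$. I will use uniform continuity of the $\alpha_i$ to show that the difference
\[
\mathbb{P}(r_{m_k+1},r_{n_k+1})-\mathbb{P}(r_{m_k},r_{n_k})
\]
tends to $0$. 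Indeed, the pairs $(r_{m_k},r_{n_k})$ and $(r_{m_k+1},r_{n_k+1})$ are at product distance at most $d(r_{m_k},r_{m_k+1})+d(r_{n_k},r_{n_k+1})\to0$. So $\alpha_i(r_{m_k+1},r_{n_k+1})-\alpha_i(r_{m_k},r_{n_k})\to0$, and the powers $d^i$ converge to $\varepsilon^i$.

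To conclude I also need the values $\alpha_i(r_{m_k},r_{n_k})$ to stay bounded. This should follow from the monotonicity $\mathbb{P}(Sr,Ss)\le\mathbb{P}(r,s)$ applied along the orbits, or it can be arranged by passing to a subsequence where needed. With boundedness, pass to a subsequence along which $s_k=\mathbb{P}(r_{m_k},r_{n_k})\to\ell$. Then $t_k=\mathbb{P}(r_{m_k+1},r_{n_k+1})\to\ell$ as well. Moreover $\ell\ge A_j\varepsilon^j>0$, and $t_k,s_k>0$. Now $(\zeta_3)$ gives $\limsup\zeta(t_k,s_k)<0$, which contradicts $\zeta(t_k,s_k)\ge0$. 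If boundedness fails, I would instead use that $\mathbb{P}$ is non-increasing under $S$ along the pair orbits to control it; this is the point where I expect the most care to be needed.

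\textbf{Existence and uniqueness.} By completeness, $r_n\to r^*$, and continuity of $S$ gives $r_{n+1}=Sr_n\to Sr^*$, so $Sr^*=r^*$. For uniqueness, suppose $u\neq v$ are both fixed points. Then $\mathbb{P}(u,v)\ge A_jd^j(u,v)>0$, and
\[
0\le\zeta(\mathbb{P}(u,v),\mathbb{P}(u,v))<0
\]
by $(\zeta_2)$, a contradiction. Since $r_0$ was arbitrary and the limit of every Picard sequence is a fixed point, every Picard sequence converges to the unique fixed point $r^*$.
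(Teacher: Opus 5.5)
Your outline is the paper's route: strict decrease of $\mathbb{P}(r_n,r_{n+1})$, then $(\zeta_3)$, then non-Cauchy pairs chosen by a threshold $\varepsilon$ on $d$, uniform continuity, and $(\zeta_3)$ again. The step you flag is a genuine gap. Uniform continuity only gives $\alpha_i(r_{m_k+1},r_{n_k+1})-\alpha_i(r_{m_k},r_{n_k})\to0$. The difference of the two $\mathbb{P}$-values also contains $\alpha_i(r_{m_k},r_{n_k})\bigl(d^i(r_{m_k+1},r_{n_k+1})-d^i(r_{m_k},r_{n_k})\bigr)$, and $(\zeta_3)$ needs a \emph{finite} common limit. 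So everything hinges on $\sup_k\alpha_i(r_{m_k},r_{n_k})<\infty$, and your remedies do not supply it:
\begin{itemize}
\item No subsequence helps if $\alpha_i(r_{m_k},r_{n_k})\to\infty$.
\item Monotonicity along pair orbits only gives $\mathbb{P}(r_{m_k},r_{n_k})<\mathbb{P}(r_{m_k-n_k},r_0)$. This is unbounded in $k$ without an a priori bound on $\mathbb{P}(r_m,r_0)$, which you do not have.
\item Uniform continuity does not bound $\alpha_i$ there either. The pair $(r_{m_k},r_{n_k})$ sits at product distance about $\varepsilon$ from consecutive pairs, possibly beyond the uniform-continuity radius. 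Even on consecutive pairs you only know $\alpha_i(r_n,r_{n+1})\,d^i(r_n,r_{n+1})\to0$.
\end{itemize}
The paper's proof has the same hole: it simply declares that $\lim_l\mathbb{P}(r_{m(l)},r_{n(l)})=:L_0$ exists. If the $\alpha_i$ are bounded, your argument works after passing to a subsequence on which every $\alpha_i(r_{m_k},r_{n_k})$ converges.

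Under the stated hypotheses you can close the gap by selecting pairs with a small threshold on $\mathbb{P}$ instead of on $d$. Choose $t_0>0$ such that $d_\times(p,q)<t_0$ implies $|\alpha_i(p)-\alpha_i(q)|<1$ for all $i$. Fix $c>0$ with $c\le A_j(\varepsilon/2)^j$ and $\rho:=(c/A_j)^{1/j}<t_0/2$. For large $n$, let $m(n)$ be the least $m>n$ with $\mathbb{P}(r_n,r_m)\ge c$. It exists because non-Cauchyness gives $m>n$ with $d(r_n,r_m)\ge\varepsilon/2$, and $m(n)\ge n+2$ because $\mathbb{P}(r_n,r_{n+1})\to0$. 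Put $x=d(r_n,r_{m(n)-1})$, $\delta=d(r_{n-1},r_n)$ and $a_i=\alpha_i(r_n,r_{m(n)-1})$.

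By minimality $a_ix^i<c$, so $x<\rho$. The product distance from $(r_n,r_{m(n)-1})$ to $(r_{n-1},r_n)$ is $\delta+x<t_0$, so $a_i\le\alpha_i(r_{n-1},r_n)+1$. Hence $a_i\delta^i\le\mathbb{P}(r_{n-1},r_n)+\delta^i\to0$. Now compare $\mathbb{P}(r_{n-1},r_{m(n)-1})$ with $\mathbb{P}(r_n,r_{m(n)-1})$:
\begin{itemize}
\item Uniform continuity, at product distance $\delta$, controls $\alpha_i(r_{n-1},r_{m(n)-1})-a_i$.
\item The bound $|d^i(r_{n-1},r_{m(n)-1})-x^i|\le i(x+\delta)^{i-1}\delta$ controls the rest. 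When $x>\delta$, use the interpolation $a_ix^{i-1}\delta=(a_ix^i)^{(i-1)/i}(a_i\delta^i)^{1/i}$.
\end{itemize}
So $\mathbb{P}(r_{n-1},r_{m(n)-1})-\mathbb{P}(r_n,r_{m(n)-1})\to0$.

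The $r_n$ are pairwise distinct, since an eventually periodic orbit with $d(r_n,r_{n+1})\to0$ would contain a fixed point. Then $(\zeta_2)$ gives $c\le\mathbb{P}(r_n,r_{m(n)})<\mathbb{P}(r_{n-1},r_{m(n)-1})<c+o(1)$. Both arguments of $\zeta\bigl(\mathbb{P}(r_n,r_{m(n)}),\mathbb{P}(r_{n-1},r_{m(n)-1})\bigr)\ge0$ therefore tend to $c>0$, contradicting $(\zeta_3)$.

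The rest of your proof is correct: asymptotic regularity, existence via continuity, and uniqueness.
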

\begin{proof}
Let $r_0$ be an arbitrary point in $M$. Construct a Picard sequence $\{r_n\}$  generated by $r_0$ as $r_n=Sr_{n-1}(=S^nr_0), n\in\mathbb{N}_0$.
Now, if there exists some $n_0\in\mathbb{N}_0$ such that $r_{n_0}=Sr_{n_0}$, then the result follows immediately.

Therefore, without loss of generality, let us assume that for all $n\in\mathbb{N}_0$, $r_n\not=r_{n+1}$   so that $d(r_{n},r_{n+1})>0$. Since $\mathbb{P}(r_n,r_{n+1}):=\sum_{i=0}^{k}\alpha_i(r_n,r_{n+1})d^{i}(r_n,r_{n+1}),$ we have $\mathbb{P}(r_n,r_{n+1})>0$. As $S$ is polynomial $\mathcal Z$-contraction, we have

	\begin{eqnarray}\label{4p1th3eq}
	\nonumber0&\leq&\zeta\big(\mathbb{P}(Sr_{n},Sr_{n+1}),\mathbb{P}(r_n,r_{n+1}))\\
	\nonumber&<&\mathbb{P}(r_n,r_{n+1})-\mathbb{P}(Sr_{n},Sr_{n+1})\\
    \mathbb{P}(Sr_{n},Sr_{n+1})&<&\mathbb{P}(r_n,r_{n+1}),
	\end{eqnarray}
	which implies that $\mathbb{P}(r_{n+1},r_{n+2})<\mathbb{P}(r_n,r_{n+1})$ for all $n\in\mathbb{N}_0$. Therefore, $\{\mathbb{P}(r_n,r_{n+1})\}_{n=0}^{+\infty}$ is a monotonically decreasing sequence of positive real numbers, and hence there exists $l\geq0$, such that $\displaystyle\lim_{n\to+\infty}\mathbb{P}(r_n,r_{n+1})=l.$ We claim that $l=0.$ On contrary, let us assume that $l>0$ then using $(\zeta_3)$, we obtain
	\begin{equation*}
	0\leq\limsup_{n\to+\infty}\zeta\big(\mathbb{P}(Sr_n,Sr_{n+1}),d(r_n,r_{n+1})\big)<0,
	\end{equation*}
	which is a contradiction and hence $l=0, i.e.,$
	\begin{equation}\label{p1th1eq}
	\lim_{n\to+\infty}\mathbb{P}(r_n,r_{n+1})=0.
	\end{equation} 
	Now, from the hypothesis of our theorem,  $\alpha_j(r,s)\geq A_j>0.$ Therefore, 
	\begin{eqnarray*}
		\nonumber \mathbb{P}(r_n,r_{n+1})&=&\sum_{i=0}^{k}\alpha_i(r_n,r_{n+1})d^{i}(r_n,r_{n+1})\\
		\nonumber&\geq&\alpha_j(r_n,r_{n+1})d^{j}(r_n,r_{n+1})\\
		&\geq&A_jd^{j}(r_n,r_{n+1})>0,
	\end{eqnarray*}
	Thus, from \eqref{p1th1eq} we must have $A_jd^{j}(r_n,r_{n+1})\to0$ (as $n\to+\infty$) and consequently
	\begin{equation}\label{12p1th1eq}
		\lim_{n\to+\infty}d(r_n,r_{n+1})=0.
	\end{equation}
	This shows that the sequence $\{r_n\}$ obtained above is asymptotically regular.
	
	We now prove that $\{r_n\}$ is Cauchy. Suppose, to the contrary, that $\{r_n\}$ is not Cauchy. Then there exists $\epsilon>0$ and $l_0\in\mathbb{N}_0$ with $m(l)>n(l)>l \geq l_0,$ such that
	\begin{equation*}
	d(r_{m(l)}, r_{n(l)}) \ge \epsilon, \qquad d(r_{m(l)}, r_{n(l)-1}) < \epsilon.
	\end{equation*}

	Using the triangle inequality and \eqref{12p1th1eq}, we obtain
	\begin{equation}\label{12p1th2eq0}
			\lim_{l\to+\infty} d(r_{m(l)}, r_{n(l)}) = \epsilon. 
			%tag3
	\end{equation}
	
	Also, since $d(r_{n+1}, r_n) \to 0$, another application of the triangle inequality gives
	\begin{equation}\label{12p1th3eq0}
		\lim_{l\to+\infty} d(r_{m(l)+1}, r_{n(l)+1}) = \epsilon.
		% \tag{4}
	\end{equation}
	Now, from \eqref{12p1th1eq} we also have
	$$
	d(r_{m(l)+1}, r_{m(l)}) \to 0, \qquad d(r_{n(l)+1}, r_{n(l)}) \to 0.
	$$
	Consider the product metric $d_\times$ on $M \times M$ defined by
	$$
	d_\times\big((x,y),(u,v)\big) := d(x,u) + d(y,v).
	$$
	Then
	\begin{equation}\label{12p1th4eq}
		d_\times\big((r_{m(l)+1}, r_{n(l)+1}), (r_{m(l)}, r_{n(l)})\big)
	= d(r_{m(l)+1}, r_{m(l)}) + d(r_{n(l)+1}, r_{n(l)}) \to 0. 
	%\tag{5}
	\end{equation}
	
	Since each $\alpha_i$ is uniformly continuous on $M \times M$ with respect to $d_\times$, and \eqref{12p1th4eq} holds, we get
	\[
	\lim_{l\to+\infty} \big| \alpha_i(r_{m(l)+1}, r_{n(l)+1}) - \alpha_i(r_{m(l)}, r_{n(l)}) \big| = 0 \quad \text{for each } i.
	\]
	Combining this with \eqref{12p1th2eq0} and \eqref{12p1th3eq0}, and using the fact that $\mathbb{P}(r,s) = \sum_{i=0}^k \alpha_i(r,s) d^i(r,s)$, we conclude
	
	\begin{eqnarray}\label{12p1th5eq}
		\lim_{l\to+\infty} \mathbb{P}(r_{m(l)+1}, r_{n(l)+1})
		&=
		\lim_{l\to+\infty} \mathbb{P}(r_{m(l)}, r_{n(l)}) &=: L_0. 
		%\tag{6}
	\end{eqnarray}
	
	Moreover, since $\alpha_j \ge A_j > 0$ and $d(r_{m(l)}, r_{n(l)}) \to \epsilon > 0$, we have
	$$
	\mathbb{P}(r_{m(l)}, r_{n(l)}) \ge A_j d^j(r_{m(l)}, r_{n(l)}) \to A_j \epsilon^j > 0,
	$$
	so $L_0 > 0$.
	
	Now, using the polynomial $\mathcal{Z}$-contraction condition for the pair $(r_{m(l)}, r_{n(l)})$, we have
	\[
	\zeta\big(\mathbb{P}(r_{m(l)+1}, r_{n(l)+1}), \mathbb{P}(r_{m(l)}, r_{n(l)})\big) \ge 0.
	\]
	Taking the limit superior as $l \to +\infty$ and applying property $(\zeta_3)$ with \eqref{12p1th5eq} and $L_0 > 0$, we obtain
	\[
	0 \le \limsup_{l\to+\infty} \zeta\big(\mathbb{P}(r_{m(l)+1}, r_{n(l)+1}), \mathbb{P}(r_{m(l)}, r_{n(l)})\big) < 0,
	\]
	which is a contradiction. Therefore, $\{r_n\}$ is Cauchy in $M$. Since $(M,d)$ is complete, therefore there exists  $ r^*\in M$ such that $r_n\stackrel{d}\longrightarrow r^*$.\\
	As $S$ is continuous, we have   
	\begin{equation*}
		r^*=\lim_{n\to +\infty}r_{n+1}=\lim_{n\to +\infty}Sr_n=S(\lim_{n\to +\infty}r_n)=Sr^*,
	\end{equation*}
	this shows $r^*$ is a fixed point of $S$. This completes the proof of existence of fixed point.
	
	Finally, we prove uniqueness. Suppose $u, v \in M$ are two fixed points of $S$ with $u \neq v$. Then $d(u,v) > 0$ and $\mathbb{P}(u,v) > 0$. By the contraction condition and property $(\zeta_2)$,
	\[
	0 \le \zeta\big(\mathbb{P}(S u, S v), \mathbb{P}(u,v)\big) = \zeta\big(\mathbb{P}(u,v), \mathbb{P}(u,v)\big) < \mathbb{P}(u,v) - \mathbb{P}(u,v) = 0,
	\]
	a contradiction. Hence, $u = v$, and the fixed point is unique.	
\end{proof}

\smallskip

\noindent	Next, we present an illustrative example that satisfies the polynomial $\mathcal Z$-contraction condition introduced in this paper, but which is neither a polynomial contraction in the sense of Jleli et al. \cite{Jleli2025} nor a $\mathcal Z$-contraction due to Khojasteh et al. \cite{khojasteh2015new}.
	
	\begin{example}\label{exm1}
	Let $M=[0,+\infty)$ be endowed with the usual metric $d(r,s)=|r-s|$. Then $(M,d)$ is a complete metric space. Define the mapping $S:M\to M$ by
	$$S(r)=\sqrt r,\qquad r\in M.$$
		
	\medskip
	
	\noindent
	\textbf{Claim 1: ($S$ is not a $\mathcal Z$-contraction)}
	Suppose that $S$ is a $\mathcal Z$-contraction. Then there exists a simulation function $\zeta\in\mathcal Z$ such that

	$$\zeta\bigl(d(Sr,Ss),d(r,s)\bigr)\ge0	\quad \text{for all } r,s\in M.$$
	For $r>s>0$, we have
	\begin{equation*}
			d(Sr,Ss)=\sqrt r-\sqrt s=\frac{r-s}{\sqrt r+\sqrt s}.
	\end{equation*}

	Letting $r=s+\varepsilon$ with $\varepsilon\to0^+$, we obtain
	\begin{equation*}
	\frac{d(Sr,Ss)}{d(r,s)}=\frac{1}{\sqrt r+\sqrt s}\longrightarrow+\infty\quad (\text{as}\ r,s\to0^+).
	\end{equation*}

	Hence the distance between images cannot be uniformly controlled by the usual distance, which contradicts the definition of a $\mathcal Z$-contraction. Therefore, $S$ is not a $\mathcal Z$-contraction.
	\medskip
	
	\noindent
	\textbf{Claim 2: ($S$ is not a polynomial contraction)}
	If $S$ satisfies a polynomial contraction, that is,
	\begin{equation*}
	\sum_{i=0}^{k} \alpha_i(Sr,Ss)\, d^i(Sr,Ss)
	\leq
	\lambda
	\sum_{i=0}^{k} \alpha_i(r,s)\, d^i(r,s)
	\end{equation*}

	for some $\lambda\in[0,1)$ and suitable nonnegative coefficient functions $\alpha_i$.
	
	Again, choosing $r=s+\varepsilon$ with $\varepsilon\to0^+$, we have
	\begin{equation*}
	\frac{d(Sr,Ss)}{d(r,s)}\longrightarrow+\infty\quad (\text{as}\ r,s\to0^+),
	\end{equation*}
	which shows that no polynomial expression in $d(Sr,Ss)$ of finite degree can be dominated by a polynomial in $d(r,s)$ with contraction constant $\lambda<1$. Hence $S$ is not a polynomial contraction.
	
	\medskip
	
	\noindent
	\textbf{Claim 3: ($S$ is Polynomial $\mathcal Z$-contraction )}
	Finally, we show that $S$ is a polynomial $\mathcal Z$-contraction for some suitable $\zeta\in\mathcal{Z}$.	Let $k=2$ and define the coefficient functions
	\begin{equation*}
			\alpha_1(r,s)=1,
		\qquad
		\alpha_2(r,s)=\frac{1}{1+r+s},
	\end{equation*}
	which are uniformly continuous on $M\times M$ and satisfy $\alpha_1(r,s)\ge1>0$ for all $r,s\in M$.
	
	Define a simulation function $\zeta:[0,+\infty)\times[0,+\infty)\to\mathbb R$ by
	\begin{equation*}
	\zeta(t,s)=s-t-\frac{s^2}{1+s}.
	\end{equation*}
	Clearly $\zeta\in\mathcal Z$. Then, for $r,s\in M$ with $r\neq s$, we compute
\begin{equation*}
	\begin{aligned}
		\sum_{i=1}^{2} \alpha_i(Sr,Ss)\, d^i(Sr,Ss)
		&=
		(\sqrt r-\sqrt s)
		+\frac{(\sqrt r-\sqrt s)^2}{1+\sqrt r+\sqrt s}.
	\end{aligned}
\end{equation*}

	Using the inequalities
\begin{equation*}
	\sqrt r-\sqrt s < r-s,
	\qquad
	(\sqrt r-\sqrt s)^2\le r-s,
\end{equation*}
	
	we obtain
\begin{equation*}
		\sum_{i=1}^{2} \alpha_i(Sr,Ss)\, d^i(Sr,Ss)
		<
		(r-s)+\frac{(r-s)^2}{1+r-s}.
\end{equation*}
	
	Consequently,
	\begin{equation*}
		\zeta\!\left(
		\sum_{i=1}^{2} \alpha_i(Sr,Ss)\, d^i(Sr,Ss),\;
		\sum_{i=1}^{2} \alpha_i(r,s)\, d^i(r,s)
		\right)\ge0,
	\end{equation*}
	
	i.e.,
	\begin{equation*}
		\zeta(\mathbb{P}(Sr,Ss),\mathbb{P}(r,s))\geq0\ \ \ \text{for all}\ r,s\in M,
	\end{equation*}
	
	\noindent which shows that $S$ is a polynomial $\mathcal Z$-contraction. Also,
	 $S$ is clearly continuous on $M$. Hence, all conditions of Theorem \ref{4p1th} are satisfied and consequently $S$ admits a unique fixed point $r^*\in M$, (namely $r^*=1$). Moreover, for every $r\in M$ the Picard sequence $\{S^n r\}$ converges to $r^*$.
	
	The above example clearly demonstrates that the class of polynomial $\mathcal Z$-contractions strictly contains both the class of polynomial contractions and the class of $\mathcal Z$-contractions, thereby confirming the genuine extension provided in the main result, $i.e.$, Theorem \ref{4p1th}.
	\end{example}

   In the previous theorem we established the existence of a unique fixed point for continuous polynomial $\mathcal{Z}$-contractions. However, in view of Proposition \ref{prop1}, the requirement of continuity is not necessary to hold the result, and therefore can be relaxed. In the following theorem we replaced it by a broader class of mappings, namely Picard continuous mapping that extend our previous main result.

\begin{theorem}\label{4p2th}
	Let $(M,d)$ be a complete metric space, and $S:M\to M$ be a Picard continuous polynomial $\mathcal{Z}$-contraction for some $\zeta\in\mathcal{Z}$. Assume that the coefficient functions $\alpha_i(r,s)$ are uniformly continuous (with respect to the usual product metric) and there exist $j\in \{1,\ldots, k\}$ and a constant $A_j>0$ such that $\alpha_j(r,s)\geq A_j$ for all $r, s\in M$. Then $S$ has a unique fixed point $r^*\in M$ and for every $r\in M$ the sequence $\{S^n{r}\}_{n\in\mathbb{N}}$ converges to $r^*$.
\end{theorem}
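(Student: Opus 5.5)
The plan is to rerun the proof of Theorem \ref{4p1th} essentially word for word and change only the step where continuity of $S$ was used. The construction of the Picard sequence and the Cauchy argument never used continuity of $S$; they used only the polynomial $\mathcal{Z}$-contraction condition, $(\zeta_2)$, $(\zeta_3)$, the uniform continuity of the $\alpha_i$, and the lower bound $\alpha_j\geq A_j$.

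Fix $r_0\in M$ and set $r_n=S^nr_0$. If $r_{n_0}=r_{n_0+1}$ for some $n_0$, we are done. Otherwise we proceed in three steps.

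\begin{enumerate}
\item Exactly as before, $(\zeta_2)$ shows that $\{\mathbb{P}(r_n,r_{n+1})\}$ is strictly decreasing.
\item Property $(\zeta_3)$ then forces its limit to be $0$. Since
\[
\mathbb{P}(r_n,r_{n+1})\geq A_j d^j(r_n,r_{n+1}),
\]
we get $d(r_n,r_{n+1})\to 0$.
\item The subsequence argument with $m(l)>n(l)$, $d(r_{m(l)},r_{n(l)})\geq\epsilon$ and $d(r_{m(l)},r_{n(l)-1})<\epsilon$ gives $d(r_{m(l)},r_{n(l)})\to\epsilon$ and $d(r_{m(l)+1},r_{n(l)+1})\to\epsilon$. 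Uniform continuity of the $\alpha_i$ with respect to $d_\times$ then gives a common limit $L_0\geq A_j\epsilon^j>0$ of $\mathbb{P}(r_{m(l)+1},r_{n(l)+1})$ and $\mathbb{P}(r_{m(l)},r_{n(l)})$. Finally $(\zeta_3)$ contradicts $\zeta(\cdot,\cdot)\geq 0$.
\end{enumerate}

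Hence $\{r_n\}$ is Cauchy, and completeness gives $r^*\in M$ with $r_n\to r^*$.

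The one new step is identifying the limit as a fixed point. We have $d(S^nr_0,r^*)\to0$, so Picard continuity applies directly and yields $d(S(S^nr_0),Sr^*)\to 0$, that is, $r_{n+1}\to Sr^*$. Limits in a metric space are unique and $r_{n+1}\to r^*$ as well, so $Sr^*=r^*$. Since $r_0$ was arbitrary, every Picard sequence converges to a fixed point.

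Uniqueness is proved verbatim as in Theorem \ref{4p1th}. If $u\neq v$ are fixed points, then $\mathbb{P}(u,v)\geq A_jd^j(u,v)>0$, and $(\zeta_2)$ gives
\[
0\leq\zeta(\mathbb{P}(u,v),\mathbb{P}(u,v))<0,
\]
a contradiction. Together with the previous paragraph, this shows every sequence $\{S^nr\}$ converges to the same $r^*$.

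I do not expect a genuine obstacle. The only delicate point is confirming that nothing in the Cauchy part of Theorem \ref{4p1th} secretly relied on continuity of $S$. It does not: the contraction inequality is applied only at pairs of Picard iterates, and the continuity used is that of the $\alpha_i$, not of $S$. One small correction I would make in passing is in the $(\zeta_3)$ step, where the second argument should read $\mathbb{P}(r_n,r_{n+1})$ rather than $d(r_n,r_{n+1})$.
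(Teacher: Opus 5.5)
Your route is the paper's route. The paper's proof of this theorem says the Cauchy part goes ``on similar lines'' as Theorem~\ref{4p1th}, then uses Picard continuity and uniqueness of limits exactly as you do. Those final steps and the uniqueness argument are fine. The problem is step 3, which both you and the paper take over unchanged: uniform continuity of the $\alpha_i$ does \emph{not} give a common limit $L_0$. Put $b_l=(r_{m(l)},r_{n(l)})$, $b_l'=(r_{m(l)+1},r_{n(l)+1})$, $d_l=d(r_{m(l)},r_{n(l)})$ and $d_l'=d(r_{m(l)+1},r_{n(l)+1})$. Uniform continuity only gives $|\alpha_i(b_l')-\alpha_i(b_l)|\to 0$, and
\[
\mathbb{P}(b_l')-\mathbb{P}(b_l)=\sum_{i=0}^{k}\bigl[\alpha_i(b_l')-\alpha_i(b_l)\bigr](d_l')^{i}+\sum_{i=1}^{k}\alpha_i(b_l)\bigl[(d_l')^{i}-d_l^{i}\bigr].
\]
The first sum tends to $0$. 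The second does so only if the $\alpha_i(b_l)$ stay bounded, and even then neither $\mathbb{P}(b_l)$ nor $\mathbb{P}(b_l')$ need converge. Nothing bounds $\alpha_i$ along the orbit: a uniformly continuous coefficient can be unbounded (e.g.\ $1+r+s$ on $[0,+\infty)^2$), and at this stage the orbit is not known to be bounded. Since $d_l\to\epsilon>0$, boundedness of the $\alpha_i(b_l)$ is equivalent to boundedness of $\mathbb{P}(b_l)$. If $\mathbb{P}(b_l)$ has a bounded subsequence, you can extract a convergent one and $(\zeta_3)$ gives the contradiction. If $\mathbb{P}(b_l)\to+\infty$, however, $(\zeta_3)$ says nothing, because it concerns only a common \emph{finite} positive limit. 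So the Cauchy property is not established.

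One way to close the gap is to work at a fixed level $C>0$ of $\mathbb{P}$ rather than a fixed level $\epsilon$ of $d$. As in your step 2, for every ordered pair of distinct iterates, $p\mapsto\mathbb{P}(r_{m+p},r_{n+p})$ decreases strictly to $0$. Suppose pairs $b_l=(r_{m_l},r_{n_l})$ with $\min(m_l,n_l)\to\infty$ satisfy $\mathbb{P}(b_l)>C\ge\mathbb{P}(Sb_l)$. Put $D_l=d(r_{m_l+1},r_{n_l+1})$ and $E_l=d(r_{m_l},r_{m_l+1})+d(r_{n_l},r_{n_l+1})\to 0$. Then $\alpha_i(Sb_l)D_l^i\le C$, and also $\alpha_i(Sb_l)E_l^i\to 0$, by cases:
\begin{itemize}
\item along subsequences where $D_l$ stays away from $0$, the $\alpha_i(Sb_l)$ are bounded;
\item if $D_l\to 0$, the pair $Sb_l$ is $d_\times$-close to $(r_{n_l},r_{n_l+1})$ and to $(r_{m_l},r_{m_l+1})$, whose $\mathbb{P}$-values tend to $0$.
\end{itemize}
Combining this with uniform continuity and the interpolation $\alpha D^{i-1}E\le(\alpha D^i)^{(i-1)/i}(\alpha E^i)^{1/i}$ gives $\mathbb{P}(b_l)-\mathbb{P}(Sb_l)\to 0$. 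Hence both sequences tend to $C$, contradicting $(\zeta_3)$. So beyond some index no pair of iterates crosses the level $C$ under $S$. This forces $\mathbb{P}(r_m,r_n)\le C$, and hence $d(r_m,r_n)\le (C/A_j)^{1/j}$, for all large $m,n$. Since $C$ is arbitrary, $\{r_n\}$ is Cauchy, and the rest of your proof goes through unchanged.
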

\begin{proof}
	On the similar lines of the proof of Theorem \ref{4p1th}, one can show that $\{r_n\}$ is Cauchy sequence. Therefore, due to the completeness of  $(M,d)$, $r_n\stackrel{d}\longrightarrow r^*$ in $M$. Now, as $S$ is Picard continuous, and $\lim_{n\to +\infty}d(r_n,r^*)=0$ (where, $\{r_n\}$ is a Picard sequence generated by $r_0$, i.e., $r_n=S^nr_0$) we must have  $\lim_{n\to +\infty}d(Sr_n,Sr^*)=\lim_{n\to +\infty}d(r_{n+1},Sr^*)=0.$ Then by the uniqueness of limits in metric space, we must have $Sr^*=r^*$. The proof for uniqueness of the fixed point can be established similarly as it is done in Theorem \ref{4p1th}.
\end{proof}
	
The following  example illustrate Theorem \ref{4p2th} for a mapping that is Picard-continuous but not continuous. Consequently, the mapping satisfies the assumptions of Theorem \ref{4p2th} while failing to meet the conditions of Theorem \ref{4p1th} as well as Theorem \ref{khojastehth}. This shows the genuineness of our main result.

\begin{example}\label{exm3} Let $(M,d)=([0,1],|\cdot|)$ and define the mapping $S:M\to M$ by
	\begin{equation*}
			S(r)=
		\begin{cases}
			0, & r\in[0,1),\\[4pt]
			\dfrac12, & r=1.
		\end{cases}
	\end{equation*}
	Clearly $S$ is discontinuous at $r=1$.
%	Let $x_n=1-\frac1n\to1$. Then $S(x_n)=0$ for all $n$, whereas $S(1)=\frac12$.
%	Hence $S(x_n)\to0\neq S(1)$, and $S$ is discontinuous at $x=1$.
	
	To check that $S$ is Picard continuous.	Let $\{r_n\}$ be a Picard sequence defined by $r_{n+1}=S(r_n)$.
	If $r_0\in[0,1)$, then $r_1=0$ and $r_n=0$ for all $n\ge1$.
	If $r_0=1$, then $r_1=\frac12$, $r_2=0$, and again $r_n=0$ for all $n\ge2$.
	Thus every Picard sequence converges to $0$, and
	$$ S(x_n)\to S(0)=0,$$
	which proves that $S$ is Picard continuous.\\
	Here, show that $S$ is in fact a Polynomial $\mathcal{Z}$-contraction.	Let $k=2$ and define the polynomial functional
	$$	P(t)=t+t^2,$$
	corresponding to the constant coefficients $\alpha_0\equiv0$, $\alpha_1\equiv1$ and
	$\alpha_2\equiv1$. Consider the simulation function
	$$	\zeta(t,s)=\frac{s}{1+s}-\frac{t}{1+t}, \qquad t,s\ge0.$$
	
	Clearly, $\zeta\in\mathcal Z$. For arbitrary $r,s\in M$, either $Sr=Ss$, in which case $P(d(Sr,Ss))=0$,
	or $\{r,s\}=\{1,z\}$ with $z<1$, yielding
	$$d(Sr,Ss)=\tfrac12 \le |1-z|=d(r,s).$$
	Hence
	$$P(d(Sr,Ss)) \le P(d(r,s)) \quad  r,s\in M.$$
	Since the function $t\mapsto \frac{t}{1+t}$ is increasing on $[0,+\infty)$,
	it follows that
	$$\zeta\!\left(P(d(Sr,Ss)),P(d(r,s))\right)\ge0,
	\quad  r,s\in M.$$
	Therefore, $S$ is a polynomial $\mathcal Z$-contraction. By Theorem~\ref{4p2th}, $S$ has a unique fixed point $r^*=0$ and for every $r\in M$,	the Picard sequence $\{S^n r\}$ converges to $0$.
\end{example}

The above example demonstrates that Theorem~\ref{4p1th} remains valid for mappings that are \emph{not even continuous}, provided that Picard continuity and the polynomial $\mathcal Z$-contraction condition are satisfied. This shows that the continuity assumption can be replaced by Picard continuity in our main result.

\begin{remark}
	In both of our main results, $i.e.$, Theorem~\ref{4p1th} and \ref{4p2th},
	the global lower bound $\alpha_j(r,s) \ge A_j > 0$ is a sufficient condition adopted for technical simplicity; it is naturally satisfied in our core examples and in many standard settings. Although this assumption can be relaxed by imposing a coercivity condition on $\mathbb{P}$ (e.g., $\mathbb{P}(r,s) \ge \delta(\varepsilon)$ whenever $d(r,s)\ge \varepsilon$), we retain the present form to maintain a transparent proof without extra technicalities.
	
\end{remark}

 A few immediate consequences of our results are listed below as corollaries. 
 \begin{corollary}[Banach contraction principle]
 	Let $(M,d)$ be a complete metric space and $S:M\to M$.
 	Assume there exists $\lambda\in[0,1)$ such that
 	$$	d(Sr,Ss)\le \lambda\, d(r,s),
 	\quad \forall r,s\in M.$$
 	
 	Then $S$ has a unique fixed point $r^*\in M$, and for every $r\in M$,
 	the Picard sequence $\{S^n r\}$ converges to $r^*$.
 	
 	\begin{proof}
 		Let $k=1$ and choose $\alpha_0(r,s)\equiv0$ and $\alpha_1(r,s)\equiv1$ for all $r,s\in M$. Take the simulation function $\zeta(t,s)=\lambda s-t,
 		\quad \lambda\in(0,1).$ Then $\zeta\in\mathcal Z$. Also, $S$ is a polynomial $\mathcal{Z}$-contraction. Moreover, $\alpha_1(r,s)\ge1$, so  Theorem \ref{4p1th}	applies here. Hence $S$ has a unique fixed point
 		and the Picard iteration converges to it.
 	\end{proof}
 	
 \end{corollary}
 
 \begin{corollary}[Polynomial contraction]
 	Let $(M,d)$ be a complete metric space and $S:M\to M$ be continuous.
 	Assume there exist $\lambda\in(0,1)$, $k\ge1$, and uniformly continuous functions
 	$\alpha_i:M\times M\to[0,+\infty)$ such that
 \begin{equation*}
 	\sum_{i=0}^k \alpha_i(Sr,Ss)\, d^i(Sr,Ss)
 	\le
 	\lambda \sum_{i=0}^k \alpha_i(r,s)\, d^i(r,s),
 	\quad \text{for all}\ r,s\in M,
 \end{equation*}
 	and $\alpha_j(r,s)\ge A_j>0$ for some $j\in\{1,\dots,k\}$. Then $S$ has a unique fixed point and Picard iteration converges to the unique fixed point.
 	\begin{proof}
 	Define $\zeta(t,s)=\lambda s-t$, which belongs to $\mathcal Z$.
 		Then
 		\begin{equation*}
  		\zeta\!\left(
 		\sum_{i=0}^k \alpha_i(Sr,Ss)\,d^i(Sr,Ss),\,
 		\sum_{i=0}^k \alpha_i(r,s)\,d^i(r,s)
 		\right)\ge0.
 		\end{equation*}
 		
 		The above inequality is precisely the polynomial $\mathcal{Z}$-contraction
 		condition that required in Theorem \ref{4p1th}. Also, the coefficient functions $\alpha_i$ are uniformly continuous and
 		$\alpha_j(r,s)\ge A_j>0$ for some $j$, all hypotheses of
 		Theorem~\ref{4p1th} hold. Therefore, $S$ admits a unique fixed point
 		and the Picard sequence converges to it.
 	\end{proof}
 	
 \end{corollary}

 \begin{corollary}[$\mathcal Z$-contraction]
 	Let $(M,d)$ be a complete metric space and $S:M\to M$ be continuous.
 	Assume there exists a simulation function $\zeta\in\mathcal Z$ such that
 	\[
 	\zeta\!\left(d(Sr,Ss),\,d(r,s)\right)\ge0,
 	\quad\text{for all}\ r,s\in M.
 	\]
 	Then $S$ admits a unique fixed point in $M$, and the Picard sequence converges
 	to it for every initial point.
 	\begin{proof}
 	This follows from Theorem \ref{4p1th} by choosing $k=1$, $\alpha_0(r,s)\equiv0$ and $\alpha_1(r,s)\equiv1$, which reduces polynomial 
 	$\mathcal{Z}$-contractions	to standard	$\mathcal{Z}$-contractions.
 	\end{proof}
 \end{corollary}
 
\begin{remark}
	The above proofs show that Banach contractions, polynomial contractions,
	and classical $\mathcal Z$-contractions arise as special cases of
	polynomial $\mathcal Z$-contractions by appropriate choices of the
	degree $k$, coefficient functions $\alpha_i$, and the simulation
	function $\zeta$.
\end{remark}
\begin{observation}
 Similarly, from Theorem~\ref{4p2th}, analogous corollaries can also be obtained for Picard continuous mapping  and hence omitted.
\end{observation}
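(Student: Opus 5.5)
The statement just before the end is the Observation, which asserts that the analogues of the three corollaries hold for Picard continuous mappings. The plan is to reproduce each corollary's proof verbatim, but to invoke Theorem~\ref{4p2th} in place of Theorem~\ref{4p1th}. Since every continuous map is Picard continuous, the only change in the hypotheses is that ``$S$ continuous'' becomes ``$S$ Picard continuous''. The remaining assumptions of Theorem~\ref{4p2th} coincide with those of Theorem~\ref{4p1th}: uniform continuity of the $\alpha_i$, a lower bound $\alpha_j\ge A_j>0$, and the polynomial $\mathcal Z$-contraction inequality.

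The Banach case goes as follows. Take $k=1$, $\alpha_0\equiv0$, $\alpha_1\equiv1$ and $\zeta(t,s)=\lambda s-t$. Constants are trivially uniformly continuous and $\alpha_1\ge1$. The contraction inequality is exactly $\zeta(\mathbb P(Sr,Ss),\mathbb P(r,s))\ge0$, so Theorem~\ref{4p2th} applies directly. In this case Picard continuity is in fact automatic, since a Banach contraction is Lipschitz, so nothing new is gained.

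For the polynomial contraction case, fix $\lambda\in(0,1)$ and uniformly continuous $\alpha_i$ with some $\alpha_j\ge A_j>0$. Again set $\zeta(t,s)=\lambda s-t\in\mathcal Z$. The polynomial inequality rewrites as $\zeta(\mathbb P(Sr,Ss),\mathbb P(r,s))\ge0$, and with $S$ only Picard continuous, Theorem~\ref{4p2th} yields a unique fixed point and convergence of every Picard sequence.

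For the $\mathcal Z$-contraction case, choose $k=1$, $\alpha_0\equiv0$, $\alpha_1\equiv1$, so that $\mathbb P=d$, and apply Theorem~\ref{4p2th}. Here Remark~\ref{12p1r} shows that every $\mathcal Z$-contraction is contractive and hence continuous, so this corollary adds nothing beyond Theorem~\ref{khojastehth}.

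The only step that needs any checking is that the proof of Theorem~\ref{4p2th} really uses nothing beyond what its statement lists. It defers the Cauchy argument to the proof of Theorem~\ref{4p1th}, and that argument never uses continuity of $S$: it relies only on $(\zeta_2)$, $(\zeta_3)$, uniform continuity of the $\alpha_i$, and the bound $\alpha_j\ge A_j$. Continuity enters only in identifying the limit as a fixed point, and that step is replaced by Picard continuity. So the substitution in each corollary is legitimate. I expect no real obstacle; the content is the bookkeeping of the reductions.
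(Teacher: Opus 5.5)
Your proposal is correct and matches the argument the paper leaves implicit ("hence omitted"): you redo each corollary's reduction ($k=1$, $\alpha_0\equiv0$, $\alpha_1\equiv1$, and/or $\zeta(t,s)=\lambda s-t$) and invoke Theorem~\ref{4p2th} in place of Theorem~\ref{4p1th}, weakening continuity of $S$ to Picard continuity. Your side remarks are also accurate: the Banach and $\mathcal Z$-contraction analogues add nothing new, since those maps are already continuous.
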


 \section{Application to a Semilinear Elliptic Dirichlet Boundary Value Problem in $L^p(\Omega)$}
    
    In this section, we apply our Theorem~\ref{4p1th} to a semi-linear elliptic boundary value problem formulated in the Lebesgue space $L^p(\Omega)$, thereby extending the applicability of the polynomial $\mathcal Z$-contraction beyond classical sup-norm settings.\\
        
    Let $\Omega\subset \mathbb{R}^n$ $(n\ge1)$ be a bounded domain with smooth boundary $\partial\Omega$. Consider the nonlinear elliptic boundary value problem
    \begin{equation}\label{eq:ellipticLp}
    	\begin{cases}
    		-\Delta u(x) = f(x,u(x)), & x\in \Omega,\\
    		u(x)=0, & x\in \partial\Omega,
    	\end{cases}
    \end{equation}
    where $f:\Omega\times\mathbb{R}\to\mathbb{R}$ is a Carath\'{e}odory function.\\
    
   For $1\le p<+\infty$, Let $M=L^p(\Omega),\ \text{and} \ d(u,v)=\|u-v\|_{L^p(\Omega)}.$ Then $(M,d)$ is a complete metric space.
   Let $G(x,y)$ denote the Green’s function associated with the Laplacian $-\Delta$ on $\Omega$ under homogeneous Dirichlet boundary conditions.
    
    Now, define the operator $S:M\to M$ by
    \begin{equation}\label{eq:operatorLp}
    	(Su)(x)=\int_\Omega G(x,y)\,f(y,u(y))\,dy,
    	\quad x\in\Omega.
    \end{equation}
    It is well known that weak solutions of \eqref{eq:ellipticLp} correspond to fixed points of the operator $S$ in $L^p(\Omega)$. Assume that the Green’s operator is bounded on $L^p(\Omega)$, i.e.,
    \begin{equation}\label{eq:GreenLp}
    	\|Su\|_{L^p(\Omega)} \le C_G \|f(\cdot,u(\cdot))\|_{L^p(\Omega)}
    	\quad \text{for all } u\in L^p(\Omega),
    \end{equation}
    for some constant $C_G>0$.
    
   Also, assume that there exist a natural number $k\ge1$, coefficient functions
    \[
    \alpha_i:M\times M\to[0,+\infty), \quad i=0,1,\ldots,k,
    \]
    and a simulation function $\zeta\in\mathcal Z$ such that for all $r,s\in M$ and \ $x\in\Omega$ a.e.,
    \begin{equation}\label{eq:polyZLp}
    	\zeta\!\left(
    	\sum_{i=0}^k \alpha_i(r,s)\,|f(x,r)-f(x,s)|^i,
    	\sum_{i=0}^k \alpha_i(r,s)\,|r-s|^i
    	\right)\ge 0.
    \end{equation}
    
    Assume further that:
    \begin{itemize}
    	\item each $\alpha_i$ is uniformly continuous w.r.t product $L^p$-norm;
    	\item there exists $j\in\{1,\ldots,k\}$ and a constant $A_j>0$ such that
    	\[
    	\alpha_j(r,s)\ge A_j \quad \text{for all } r,s\in\mathbb{R};
    	\]
    	\item $f(\cdot,u(\cdot))\in L^p(\Omega)$ for all $u\in L^p(\Omega)$.
    \end{itemize}
    
    Under these assumptions, the operator $S$ defined in \eqref{eq:operatorLp} is a continuous polynomial $\mathcal Z$-contraction on $(L^p(\Omega),d)$. Then, by Theorem~\ref{4p1th}, we can ensure that the operator $S$ admits a unique fixed point $u^*\in L^p(\Omega)$. Consequently, problem \eqref{eq:ellipticLp} has a unique weak solution $u^*\in L^p(\Omega)$. Moreover, for any initial function $u_0\in L^p(\Omega)$, the Picard iteration
    $ u_{n+1}=Su_n, \ n\in\mathbb{N}$,
    converges strongly in $L^p(\Omega)$ to the unique solution $u^*$.
	
	\begin{example} Let $\Omega=(0,\pi)$ and let $1\le p<+\infty$. Consider the problem
\begin{equation}\label{BVP}
	\begin{cases}
		-u''(x)	
		=\sin x+\tanh(u(x)-\sin x)
		+\frac{(u(x)-\sin x)^2}{1+(u(x)-\sin x)^2},
		& x\in(0,\pi),\\[6pt]
		u(0)=u(\pi)=0.
	\end{cases}
\end{equation}

Let $M=L^p(\Omega)$ be endowed with the metric $d$ induced by $\|\cdot\|_{L^p(\Omega)}$, then $(M,d)$ is complete.

\medskip

Consider the Green function  $G:\Omega\times\Omega\to\mathbb{R}$  associated with $-u''$ under Dirichlet boundary conditions, given by
\begin{equation}
G(x,y)=
\begin{cases}
	\dfrac{x(\pi-y)}{\pi}, & x\le y,\\[6pt]
	\dfrac{y(\pi-x)}{\pi}, & x>y.
\end{cases}
\end{equation}

Define a operator $S:M\to M$ by
$$(Su)(x):=\int_\Omega G(x,y)\,f(y,u(y))\,dy,$$
where
\begin{equation}\label{eq:fexample}
	f(x,r)=\sin x+\tanh(r-\sin x)
	+\frac{(r-\sin x)^2}{1+(r-\sin x)^2}.
\end{equation}
Then, $f$ is a Carathéodory function since $f(\cdot,r)$ is measurable
for each $r\in\mathbb R$ and $f(x,\cdot)$ is continuous for \ $x\in\Omega$ a.e.
Moreover, the functions $\tanh$ and
$r\mapsto \frac{r^2}{1+r^2}$ are bounded.
Hence,
$$|f(x,u(x))|\le C(1+|u(x)|)\quad\text{a.e.\ in }\Omega,$$
for some constant $C>0$, which implies
$f(\cdot,u(\cdot))\in L^p(\Omega)$ for all $u\in L^p(\Omega)$.

Also, by assumption, the Green operator is bounded on $L^p(\Omega)$, that is,
$$\|Su\|_{L^p(\Omega)}\le C_G\|f(\cdot,u(\cdot))\|_{L^p(\Omega)},
\quad u\in L^p(\Omega),$$ for some constant $C_G>0$.
Now, take
$$\zeta(t,s)=\frac{s}{1+s}-\frac{t}{1+t},
\qquad t,s\ge0,$$
Then, it is easy to check that $\zeta\in\mathcal Z$ and define
$$\alpha_1(r,s)\equiv1,
\qquad
\alpha_2(r,s)\equiv 1,
\quad r,s\in M.$$
For the function
$$f(x,r)=\sin x+\tanh(r-\sin x)
+\frac{(r-\sin x)^2}{1+(r-\sin x)^2}.$$
Since both $\tanh$ and $t\mapsto \frac{t^2}{1+t^2}$ are globally Lipschitz
functions on $\mathbb R$, there exists a constant $L>1$ such that
\begin{equation}\label{eq:Lipschitzf}
	|f(x,r)-f(x,s)|\le L|r-s|,
	\quad
	\text{for all } r,s\in\mathbb R,\ \text{a.e. }x\in\Omega.
\end{equation}

Using \eqref{eq:Lipschitzf}, we obtain
\begin{equation*}
\begin{aligned}
	\sum_{i=1}^2 \alpha_i(r,s)\,|f(x,r)-f(x,s)|^i
	&=
	|f(x,r)-f(x,s)|
	+|f(x,r)-f(x,s)|^2
	\\
	&\le
	L|r-s|+L^2|r-s|^2.
\end{aligned}
\end{equation*}
On the other hand,
$$\sum_{i=1}^2 \alpha_i(r,s)\,|r-s|^i
=|r-s|+|r-s|^2.$$

Setting $t=|r-s|\ge0$, define
$$\Phi(t)=Lt+L^2t^2,\qquad \Psi(t)=t+t^2.$$
Then
$$\sum_{i=1}^2 \alpha_i(r,s)\,|f(x,r)-f(x,s)|^i \le \Phi(t), \qquad
\sum_{i=1}^2 \alpha_i(r,s)\,|r-s|^i = \Psi(t).$$

Next, consider an auxiliary function
$$\phi(u)=\frac{u}{1+u},\qquad u\ge0.$$
The function $\phi$ is strictly increasing on $[0,+\infty)$.
Therefore,
\begin{equation*}
\phi(\Phi(t))=\frac{\Phi(t)}{1+\Phi(t)}\le\frac{\Psi(t)}{1+\Psi(t)}=\phi(\Psi(t)),
\quad t\ge0.
\end{equation*}

Consequently,
\begin{equation*}
\frac{
	\sum_{i=1}^2 \alpha_i(r,s)\,|f(x,r)-f(x,s)|^i
}{
	1+\sum_{i=1}^2 \alpha_i(r,s)\,|f(x,r)-f(x,s)|^i
}
\le
\frac{
	\sum_{i=1}^2 \alpha_i(r,s)\,|r-s|^i
}{
	1+\sum_{i=1}^2 \alpha_i(r,s)\,|r-s|^i
}.
\end{equation*}

Hence,
$$\zeta\!\left(
\sum_{i=1}^2 \alpha_i(r,s)\,|f(x,r)-f(x,s)|^i,\;
\sum_{i=1}^2 \alpha_i(r,s)\,|r-s|^i
\right)\ge0,$$
for all $r,s\in\mathbb R$ and for a.e.\ $x\in\Omega$. Therefore, all the assumptions of Theorem~\ref{4p1th} are satisfied by the boundary value problem \ref{BVP} and the operator $S$ admits a unique fixed point $u^*(x)=\sin x$ and the Picard iteration $u_{n+1}=Su_n$ converges to $u^*$ in $L^p(\Omega)$ for any initial function $u_0\in L^p(\Omega)$(Figure~\ref{fig:PicardConvergence} illustrates the convergence of the Picard iteration). Consequently, $u^*$ is the unique weak solution of \eqref{BVP} in $L^p(\Omega)$.
\begin{figure}[h!]
	\centering
		% Subfigure a: Iterates
	\begin{subfigure}[b]{0.48\textwidth}
		\centering
		\includegraphics[width=\textwidth]{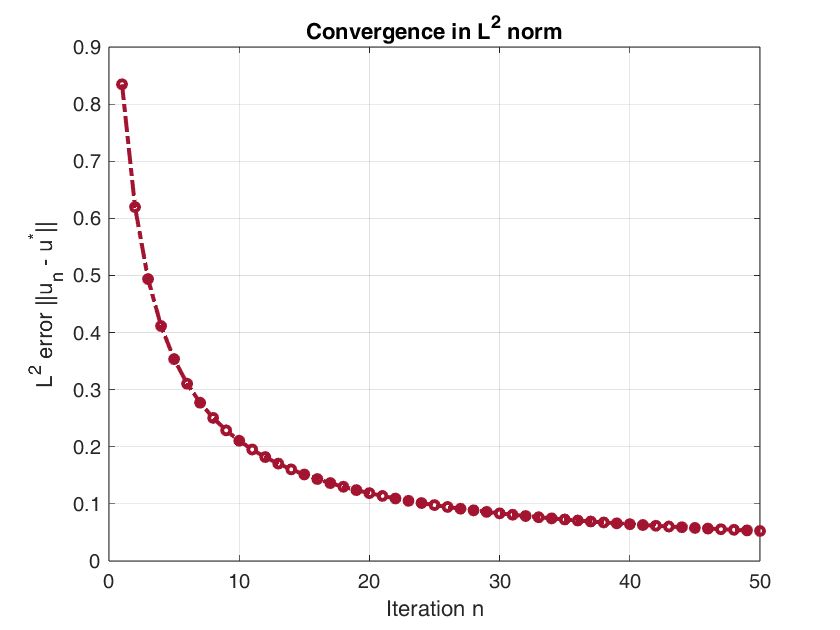} % MATLAB figure 1
		\caption{Decay of $L^2$ error $\|u_n-u^*\|_{L^2(0,\pi)}$ showing convergence.}
		\label{fig:iterates}
	\end{subfigure}
	\hfill
	% Subfigure b: L2 error
	\begin{subfigure}[b]{0.48\textwidth}
		\centering
		\includegraphics[width=\textwidth]{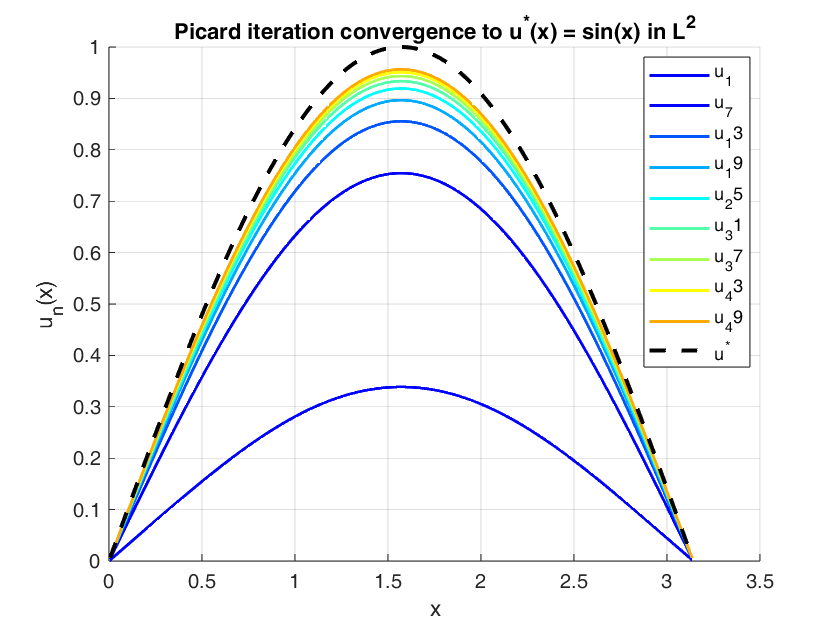} % MATLAB figure 2
		\caption{Picard iterates $u_n(x)$ converging to $u^*(x)=\sin x$ in $L^2(0,\pi)$.}
		\label{fig:L2error}
	\end{subfigure}
	
	\caption{Illustration of Picard iteration under a polynomial $\mathcal Z$-contraction for the BVP \eqref{BVP}.}
	\label{fig:PicardConvergence}
\end{figure}
\end{example}

\noindent {\bf Data availability} The authors confirm that there is no associated data.\\

%\noindent {\bf \large Declarations}\\

\noindent {\bf Funding declaration} No funding.\\

\noindent {\bf Competing interests} The authors declare no competing interests.\\

\noindent{\bf Acknowledgments}  The author wishes to thank Dr. Hayel N. Saleh for his personal communication during the preparation of this manuscript.

%\bibliographystyle{ieeetr}
%\bibliography{bibfile}
\end{document}